\theoremstyle{definition}
\newtheorem{definition}{Definition}
\theoremstyle{plain}
\newtheorem{theorem}[definition]{Theorem}
\newtheorem{remark}[definition]{Remark}
\newtheorem{property}[definition]{Property}
\def\be{\begin{equation}}
\def\ee{\end{equation}}
\def\bp{\begin{property}}
\def\ep{\end{property}}
\def\bbP{{\mathbb P}}
\def\eee12{\frac{1}{\sqrt{r+\frac{1}{12}}}}
\def\ep{\varepsilon({\cal O}_{\bbP^2}(1), P_1,...P_r)}
\begin{document}

\title{\bf Some particular norm in the Sobolev space $H^{1}[a,b].$}

\author{Edward Tutaj}

\thanks{Keywords: absolutely continuous functions, isoperimetric inequality, Hilbert spaces, Sobolev spaces,
reproducing kernels}

\subjclass{}

\begin{abstract}

 {\small This paper is a continuation of the recent paper of the author, where a
 certain reproducing kernel Hilbert space $X_{\mathcal{S}}$ was constructed.
The norm in $X_{\mathcal{S}}$ is related to a certain generalized
isoperimetric inequality in ${\mathbb R^2}$. In the present paper
we give an alternative description of the space $X_{\mathcal{S}}$,
which appears to be a Sobolev space $H^{1}[a,b]$ with some special
norm.}
\end{abstract}

\maketitle

\section{Introduction}

This paper is a continuation of the recent paper of the author,
\cite{Tut}. In this paper we consider a Sobolev space $H^1[a,b]$.
We recall some necessary definitions and properties of these
spaces below, but at the beginning it is sufficient to know that
the elements of $H^1[a,b]$ are continuous functions with the
derivatives in $L_2[a,b]$. Then, if one will define an unitary
structure in $H^1[a,b]$, the formula for $\langle f,g\rangle$ must
necessarily contain the bilinear form ($f,g \in H^1[a,b]$)
\begin{equation}\label{formulawSobolewie1}
\int_a^b f'(x)\cdot g'(x)dx.
\end{equation}
Clearly the quadratic form generated by (\ref{formulawSobolewie1})
is only a seminorm in $H^1[a,b]$, hence to have a norm one must
add "something" to (\ref{formulawSobolewie1}), or consider a
smaller space. In (\ref{iloczyn skalarny 1}) we present a typical
example of this kind. In this paper we propose, a new formula to
define an inner product in $H^1[a,b]$ inspired by the very old
idea of the isoperimetric inequality in the plane. The paper
\cite{Tut} contains the construction of a certain reproducing
kernel Hilbert space  $X_\mathcal{S}$ (the necessary information
concerning these class of spaces are to be found in the next
section). Its kernel, given by the formula (\ref{rkhsK}), is not
very complicated. However the description of the function space
$X_\mathcal{S}$ is not satisfactory. The elements of this space
are the equivalent classes of pairs of convex sets relative to
some equivalence relation (formula (\ref{relacjakaro})) and then
it is not easy to look on the elements of $X_\mathcal{S}$ as on
the real function defined on some interval. But, on the other
hand, we have a concrete positively defined function $K$ (in our
situation the function $K(\varphi,\psi)$ given by the formula
(\ref{rkhsK})) so we may try to construct another, perhaps more
convenient, model of the space $\mathcal{H}(K)$ corresponding to
the considered kernel. Such a situation occurs frequently in the
theory of RKHS's  and has even his own name ({\it the
reconstruction problem,} cf \cite{Pau}). The aim of this paper is
to give such  a concrete description of the space $\mathcal{H}(K)$
for the kernel given by (\ref{rkhsK}). It will appear that in our
case the space $\mathcal{H}(K)$ is a Sobolev space $H^1[a,b]$
equipped with some special norm. The construction of "our"
$\mathcal{H}(K)$ is contained in the three theorems: Theorem
\ref{dodatnia okre\'slono\'s\'c}, Theorem \ref{aboutkernel} and
the Theorem \ref{completness}.

 In Theorem \ref{dodatnia okre\'slono\'s\'c}
we define a certain bilinear form $\langle,\rangle_i$
(\ref{iloczynizop}) and we prove that this form is positively
defined. In the proof we use the Wirtinger inequality
(\ref{nierWirtingera}) in a general version for the derivatives
from $L_2[a,b]$. This variant of the Wirtinger inequality  is to
be found in \cite{Har}.

In Theorem \ref{aboutkernel} we check the reproducing property of
the given kernel (\ref{rkhsK}). The proof is easy to understand,
but may be is heavy to read because of lengthy calculations.  The
kernel (\ref{rkhsK}), as far as we know, was not studied before.

In the last Theorem \ref{completness} we prove, that the
constructed space is an RKHS.  For this we must  check, that the
evaluation functionals are commonly bounded, and that the
constructed space is complete. This type of arguments is typical
in  examples of Sobolev spaces $H^1[a,b]$ \cite{Ber}.
    Summarizing,
the main result of this paper says, that the space
$H^{1}[-\frac{\pi}{2},\frac{\pi}{2}]$ equipped with the norm
$||\cdot||_i$ is a reproducing kernel Hilbert space corresponding
to the same kernel (\ref{rkhsK}) as the space $X_{\mathcal{S}}$
constructed in \cite{Tut}.

We present also one more construction of the considered space
$\mathcal{H}(K)$, called here the {\it sequence model}. This
sequence model, if one looks from strictly theoretical point of
view, does not bring new information, but gives  tools for proofs
and for numerical calculations. This model represents also a kind
of presentation of the density of polygons in the space of convex
sets.

From a number of known examples of the kernels in the space
$H^1[a,b]$, we present here one see formulas (\ref{Berlinet1}) and
(\ref{Berlinet2}) in order to compare it with the considered here
kernel (\ref{rkhsK}).

This paper is organized as follows. In the second we give a
summary of the paper \cite{Tut} necessary to understand the main
result of the present paper.  We recall also some information on
the Sobolev of the type $H^{1}[a,b]$ in the aim to see the
"particularity" of the norm we are going to construct. In the
third section we define some special (particular) norm
$||\cdot||_i$ in the space $H^{1}[-\frac{\pi}{2},\frac{\pi}{2}]$.
In the last section we present the construction of the {\it
sequence model}.

 \section{About the space of generalized convex sets} In this subsection
 we recall the construction of the space $X_\mathcal{S}$ from \cite{Tut}. Let
 $\mathcal{S}$ denote the cone of convex, compact and centrally
 symmetric sets in the plane $\mathbb R^2$. In this cone we
 consider the {\it Minkowski addition} and the scalar multiplication by
 non-negative real numbers. In the product $\mathcal{S}\times
 \mathcal{S}$ we consider the equivalence relation
 \begin{equation}\label{relacjakaro}
 (U,V)\diamondsuit(P,Q)\Longleftrightarrow U+Q=V+P
 \end{equation}
 It is known, that the quotient $\mathcal{S}\times
 \mathcal{S}/\diamondsuit$ is a real vector space (for details see e.g. \cite{Rad}),
 and the
 space $X_{\mathcal{S}}$ mentioned above is the completion of the
 quotient $\mathcal{S}\times \mathcal{S}/\diamondsuit$ with
 respect to the so-called {\it isoperimetric norm}, which is
 constructed in a few steps. At first we prove, that the two
 dimensional Lebesgue measure $m:\mathcal{S}\longrightarrow
 [0,\infty)$ can be extended in a unique way to the polynomial
 of the second degree $m^{*}:\mathcal{S}\times
 \mathcal{S}/\diamondsuit \longrightarrow \mathbb R$ and the functional
 $o:\mathcal{S}\longrightarrow \mathbb R$ where $o(U)$ is a perimeter of U,
 can be extended to a linear functional
 on the whole $\mathcal{S}\times \mathcal{S}/\diamondsuit$. Next
 we
 prove that for this extended measure $m^{*}$ the {\it generalized
 isoperimetric inequality} holds, which means that for each
 $[U,V]\in \mathcal{S}\times \mathcal{S}/\diamondsuit$ we have
 \begin{equation}\label{nierownoscizoperymetryczna}
 (o(U)-o(V))^2-4\pi\cdot m([U,V])\geq 0,
 \end{equation}
 where $o(U)-o(V)$ is the perimeter of the pair $[U,V]$ and (here and in the sequel)
 we write $m([U,V])$ instead of $m^{*}([U,V])$. Using this fact it was proved, that
  the formula
  \begin{equation}\label{normaizoper}
  ||[U,V]||^2_i=2\cdot(o(U)-o(V))^2-4\pi\cdot m([U,V])
  \end{equation}
  is a unitary norm in $\mathcal{S}\times
  \mathcal{S}/\diamondsuit$. Finally the space $X_\mathcal{S}$ was
  defined as the completion of the unitary space
  $(\mathcal{S}\times \mathcal{S}/\diamondsuit; ||\cdot||_i)$.

 \subsection{About the RKHS}In the last section of \cite{Tut}
 we prove that the space
$X_{\mathcal{S}}$ is a {\it reproducing kernel Hilbert space}
(RKHS for short).
 Although the reproducing property was discovered by Zaremba more
than hundred years ago (1906), the first systematic lecture is due
to Aronszajn in 1950 (see \cite{Aro}). The necessary information
concerning RKHS are to be found in the book of Berlinet and
Thomas-Agnant (\cite{Ber}) or in the recent book of Szafraniec
(\cite{Szaf}). The RKHS are function spaces, so at first one must
recognize the elements of $\mathcal{S}\times
\mathcal{S}/\diamondsuit$ as real functions on the interval
$\Delta=[-\frac{\pi}{2},\frac{\pi}{2}]$. This may be done as
follows. We consider the functions
\begin{equation}\label{diangles}
I_{\psi}:[-\frac{\pi}{2},\frac{\pi}{2}]\ni \varphi\longrightarrow
\sin|\varphi-\psi|,
\end{equation}
where $\varphi,\psi \in [-\frac{\pi}{2},\frac{\pi}{2}].$ These
functions $I_\psi$ are called {\it diangles}. Each diangle defines
a line in $\mathbb R^2$ ($\mathbb R \cdot (\cos \psi,\sin\psi))$
which will be denoted also by $I_\psi$ and for the set $U\in
\mathcal{S}$ the number $\overline{U}(I_\psi)$ denotes the {\it
width} of the set $U$ with respect to the line $I_\psi$. The
correspondence
\begin{equation}\label{szerokosc}
 [U,V]\rightarrow f_{[U,V]} = \overline{U}(I_\varphi) -
  \overline{V}(I_{\varphi}),
\end{equation}
allows us  to see the equivalent classes $[U,V]$ as functions on
the interval $\Delta$ (the details in \cite{Tut}). Now we are
ready to formulate the main result from \cite{Tut}.

\begin{theorem}\label{rkhsX}
The space $X_\mathcal{S}$ with the norm (\ref{normaizoper}) is a
reproducing kernel Hilbert space with the kernel $K:\Delta\times
\Delta \longrightarrow \mathbb R$, where
\begin{equation}\label{rkhsK}
K(\varphi,\psi)= 2 - \frac{\pi}{2}\sin|\varphi-\psi|.
\end{equation}
\end{theorem}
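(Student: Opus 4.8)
The plan is to prove the statement by the standard two-step characterisation of a reproducing kernel Hilbert space: after realising $X_{\mathcal{S}}$ as a Hilbert space of genuine functions on $\Delta$ through the width correspondence (\ref{szerokosc}), I would (i) produce an explicit formula for the inner product in this function model, (ii) exhibit for each $\psi\in\Delta$ a concrete representer $k_\psi\in X_{\mathcal{S}}$ of the evaluation functional $f\mapsto f(\psi)$, and (iii) read off the kernel as $K(\varphi,\psi)=k_\psi(\varphi)$. Polarising the norm (\ref{normaizoper}) gives, for the inner product,
\begin{equation}\label{ip-model}
\langle[U,V],[P,Q]\rangle_i = 2\,(o(U)-o(V))\,(o(P)-o(Q)) - 4\pi\, m([U,V],[P,Q]),
\end{equation}
where $m(\cdot,\cdot)$ is the symmetric bilinear mixed area polarising $m$. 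The first, linear, part I would make explicit through Cauchy's perimeter formula, which in the present normalisation reads $o(U)-o(V)=\int_\Delta f_{[U,V]}(\varphi)\,d\varphi$; the second, quadratic, part is the mixed-area form of the two width functions.

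The representer is found by an educated guess dictated by the two summands of the target kernel (\ref{rkhsK}): the constant $2$ should come from a body of constant width and the term $-\frac{\pi}{2}\sin|\varphi-\psi|$ from a single diangle. Concretely I would try $k_\psi=[D,c\,S_\psi]$, with $D$ a disc of suitable radius and $S_\psi$ the segment whose width function (\ref{diangles}) is $I_\psi$, and fix the two constants afterwards. Because the width functions of polygons are finite nonnegative combinations of diangles and such polygons are dense among centrally symmetric convex bodies, the diangles span a dense subspace of $X_{\mathcal{S}}$; hence, by bilinearity of (\ref{ip-model}) and continuity, the reproducing identity $\langle f,k_\psi\rangle_i=f(\psi)$ needs only be verified on $f=I_\alpha$, i.e.
\begin{equation}\label{repr-check}
\langle I_\alpha,k_\psi\rangle_i = I_\alpha(\psi)=\sin|\alpha-\psi|\qquad(\alpha\in\Delta).
\end{equation}
Evaluating (\ref{ip-model}) on $I_\alpha$ and $k_\psi$ requires only two elementary mixed areas — that of a segment with a disc (independent of the direction, by rotational symmetry) and that of two segments (the area of the spanned parallelogram, proportional to $\sin|\alpha-\psi|$) — after which (\ref{repr-check}) collapses to matching a constant term and the coefficient of $\sin|\alpha-\psi|$. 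These two linear conditions determine $c$ and the radius of $D$ uniquely and, with the normalisation fixed in Section 2, reproduce precisely $K(\varphi,\psi)=2-\frac{\pi}{2}\sin|\varphi-\psi|$.

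It remains to certify that $X_{\mathcal{S}}$ really is an RKHS. Once $k_\psi$ is in hand one has $K(\psi,\psi)=\langle k_\psi,k_\psi\rangle_i=2$ for every $\psi$, so Cauchy--Schwarz yields $|f(\psi)|=|\langle f,k_\psi\rangle_i|\le \sqrt{2}\,||f||_i$: the evaluation functionals are bounded uniformly in $\psi$. Together with the completeness of $X_{\mathcal{S}}$ (it was built as a completion), this is exactly the definition of an RKHS, and its kernel must then coincide with $(\varphi,\psi)\mapsto\langle k_\psi,k_\varphi\rangle_i=K(\varphi,\psi)$. The main obstacle I anticipate is the honest derivation of the quadratic part of (\ref{ip-model}) as a closed expression in the width functions — i.e.\ a usable formula for the mixed area of two width functions on $\Delta$ — together with the rigorous density argument needed to pass from diangle combinations and the quotient $\mathcal{S}\times\mathcal{S}/\diamondsuit$ to the completed space; one must also check that the guessed representer $k_\psi$ genuinely belongs to $X_{\mathcal{S}}$ rather than only to its function model. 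The verification of (\ref{repr-check}) itself, by contrast, is a short computation once these structural facts are secured.
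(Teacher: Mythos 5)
Your proposal is correct in outline, but it proves the theorem by a genuinely different route from the one this paper takes --- indeed, the paper does not prove Theorem \ref{rkhsX} at all: it is quoted from \cite{Tut} as background, and the actual content of the present paper is to establish the corresponding statements in a different model. Your argument stays entirely inside the convex-geometric picture: polarise the norm (\ref{normaizoper}) into a perimeter--perimeter term plus a mixed-area term, exhibit the representer of evaluation as an honest pair $k_\psi=[D_1,\tfrac{\pi}{2}S_\psi]$ (unit disc minus a scaled segment --- your two undetermined constants do come out to exactly these values), verify the reproducing identity on diangles via two elementary mixed areas, and extend by density of zonogons. The paper instead transports everything to the function model $H^1_0(\Delta)$ via the width correspondence (\ref{szerokosc}) and proves the analogous facts there by hard analysis: positive definiteness of the explicit integral form (\ref{iloczynizop}) via Wirtinger's inequality (Theorem \ref{dodatnia okre\'slono\'s\'c}), the reproducing property by a direct integration by parts (Theorem \ref{aboutkernel}), and boundedness of evaluations plus completeness by a H\"older-$\tfrac12$ estimate and uniform/$L^2$ convergence (Theorem \ref{completness}). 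Your route buys geometric transparency and avoids the Wirtinger machinery (positivity of the form is inherited from the generalized isoperimetric inequality (\ref{nierownoscizoperymetryczna})); the paper's route buys what it is actually after, namely a concrete function-space description in which membership of a given $f:\Delta\to\mathbb R$ can be tested.

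Two caveats on your plan. First, verifying (\ref{repr-check}) only on the diangles $I_\alpha$ silently assumes that the disc (equivalently the constant function $\mathbf 1$) lies in the closed linear span of the diangles; this is true in the plane (averages of $I_{\psi_j}$ over equidistributed directions converge to a constant, and zonogons are dense among centrally symmetric convex bodies), but it must be argued --- otherwise you should also check $\langle \mathbf 1,k_\psi\rangle_i=\mathbf 1(\psi)=1$ directly, as the paper's sequence model in Section 4 does by treating $\mathbf 1$ as a separate generator of $\mathcal F$. Second, be aware of a normalisation discrepancy in the source: (\ref{normaizoper}) as printed lacks the factor $\tfrac{1}{4\pi^2}$ that appears in (\ref{holder2}) and (\ref{norm w XS}); only with that factor does one get $\langle\mathbf 1,\mathbf 1\rangle_i=1$, $K(\psi,\psi)=\|k_\psi\|_i^2=2$, and hence your uniform bound $|f(\psi)|\le\sqrt2\,\|f\|_i$. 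With these points secured, your argument closes.
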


The elements of $X_\mathcal{S}$ are, roughly speaking, the
"differences" of convex sets (differences with respect to the
Minkowski addition). This makes it possible to understand their
geometrical character, but on the other hand, given a function
$f:\Delta\longrightarrow \mathbb R$ it is difficult to prove (or
disprove), that $f\in X_\mathcal{S}$. In this paper we will solve
this problem. More exactly we will prove, that $X_\mathcal{S}$ is
a certain Sobolev space.

 \subsection{About the Sobolev spaces.} We shall start by recalling some commonly
  known definitions and
theorems concerning the Sobolev spaces.

\begin{definition}\label{definicjaabsciaglasci}

Let  $\Delta =[a,b]\subset \mathbb R$ be a compact interval. A
function $f:\Delta \longrightarrow \mathbb R$ is said to be {\it
absolutely continuous} on $\Delta$ if for every $\epsilon > 0$
there exists $\delta >0$ such that

\begin{equation}
\sum_{k=1}^{p}|f(b_k)-f(a_k)|\leq \epsilon
\end{equation}
for every finite number of nonoverlappig intervals $(a_k,b_k)$,
$k=1,2,...,p$ with $[a_k,b_k]\subset \Delta$ and

\begin{equation}
\sum_{k=1}^{p}(b_k-a_k)\leq \delta.
\end{equation}
\end{definition}

Let $AC_1(\Delta)$ denote the vector space of all absolutely
continuous functions on $\Delta$. It is known, that each $f\in
AC_1(\Delta)$ is differentiable almost everywhere and its
derivative $f'$ is a Lebesgue integrable function (i.e. $f'\in
L_1(\Delta)$). We will consider a subspace $H^{1}(\Delta)\subset
AC_1(\Delta)$ claiming that

\begin{equation}\label{definicjaH1}
f\in H^{1}(\Delta) \Longleftrightarrow f\in AC_1(\Delta) \wedge
f'\in L_2(\Delta).
\end{equation}

It is also known (see e.g. \cite{Pau}), that one may consider an
inner product $\langle,\rangle$ in $H^{1}(\Delta)$ given by the
formula (\ref{iloczyn skalarny 1}) for $f,g \in H^{1}(\Delta)$:

\begin{equation}\label{iloczyn skalarny 1}
\langle f,g\rangle=   \int_{a}^{b}(f\cdot g + f'\cdot g')
\end{equation}
which is well defined since $f',g' \in L_2(\Delta)$. One may
check, that $H^{1}(\Delta)$, equipped with the above inner product
(\ref{iloczyn skalarny 1}), is a Sobolev space with the
reproducing kernel $K(x,y)$ where
\begin{equation}\label{Berlinet1}
K(x,y)=\frac{\cosh(x-a)\cosh(b-y)}{\sinh(b-a)},\\\ a\leq x\leq
y\leq b
\end{equation}

\begin{equation}\label{Berlinet2}
K(x,y)= \frac{\cosh(x-a)\cosh(b-y)}{\sinh(b-a)},\\\ a \leq y\leq x
\leq b
\end{equation}

Some other examples of reproducing kernels in Sobolev spaces are
to be found in \cite{Ber} or in \cite{Pau}.

In the present paper we  construct another inner product
$\langle,\rangle_i$ defined in a subspace
$H^{1}_{0}(\Delta)\subset H^{1}(\Delta)$, related to another
reproducing kernel $K_i(x,y),$ which will be defined in the next
section.

\section{An inner product in the Sobolev  space $H_0^{1}(\Delta)$}

Now we fix $\Delta=[-\frac{\pi}{2},\frac{\pi}{2}]$ and we consider
the subspace $H^{1}_{0}\subset H^{1}(\Delta)$ claiming that
$f(-\frac{\pi}{2})=f(\frac{\pi}{2})$. This means, that
$H^1_0(\Delta)$ may be identified with the space of periodic
functions with analogous properties (i.e. absolutely continuous
and with the derivatives in $L_2(\Delta)$. For $f,g \in
H_0^{1}(\Delta)$ we set

\begin{equation}\label{iloczynizop}
\langle f,g\rangle_i = \frac{1}{\pi^2}[2\cdot
(\int_{-\frac{\pi}{2}}^{\frac{\pi}{2}}f)\cdot
(\int_{-\frac{\pi}{2}}^{\frac{\pi}{2}} g) - \pi
\cdot\int_{-\frac{\pi}{2}}^{\frac{\pi}{2}} (f\cdot g - f'\cdot
g')].
\end{equation}

Here, and below we will often write $\int_a^b f$ or $\int_a^b
f(x)$ instead of $\int_a^b f(x)dx$. We will frequently also write
$\int f$ instead of $\int_a^b f$ in the case when $a=
-\frac{\pi}{2}$ and $b=\frac{\pi}{2}$.

It is clear, that the formula (\ref{iloczynizop}) is well defined
(because of $f',g' \in L_2(\Delta)$) and defines a bilinear  form
in $H_0^{1}(\Delta)$. The coefficient $\frac{1}{\pi^2}$ is clearly
without importance and is chosen  to have $\langle{\bf 1},{\bf
1}\rangle_i = 1$ (where ${\bf 1}$ denotes the constant (and equal
1) function). Hence, to have a norm related to the form
(\ref{iloczynizop}), it remains to show that $\langle,\rangle_i$
is positively defined. The proof of this positivity is similar in
fact to the known proof of the isoperimetric inequality based on
the Wirtinger inequality.

Let us start by recalling a variant of the Wirtinger inequality
(for the proof and some other formulations see
\cite{Har},\cite{Tre}).

\begin{theorem}\label{nierWirtingera}
Let $f$ be a continuous and periodic function with the period
$\pi$. Let $f'\in L_2(\Delta)$ and let
\begin{equation}\label{srednia}
\overline{f} =
\frac{1}{\pi}(\int_{-\frac{\pi}{2}}^{\frac{\pi}{2}}f),
\end{equation}
Then
\begin{equation}\label{nierWirtbis}
\int_{-\frac{\pi}{2}}^{\frac{\pi}{2}}(f')^2 \geq
\int_{-\frac{\pi}{2}}^{\frac{\pi}{2}}(f-\overline{f})^2,
\end{equation}
 and
equality holds if and only if $f=a \cos(t) + b\sin(t)$.

\end{theorem}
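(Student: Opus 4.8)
The natural route is through Fourier series, which is the mechanism behind Hurwitz's proof of the isoperimetric inequality and behind the references \cite{Har}, \cite{Tre}. Since $f$ is continuous and $\pi$-periodic and, by (\ref{definicjaH1}), absolutely continuous with $f'\in L_2(\Delta)$, I would first expand it in the trigonometric system orthogonal on $\Delta=[-\frac{\pi}{2},\frac{\pi}{2}]$ that is compatible with the period $\pi$:
\[
f(t)=\overline{f}+\sum_{n\geq 1}\bigl(a_n\cos(2nt)+b_n\sin(2nt)\bigr),
\]
where the constant coefficient is precisely the mean $\overline{f}$ of (\ref{srednia}). The role of the periodicity hypothesis here is structural: it is what selects this orthogonal family on an interval of length $\pi$ and guarantees that there is no cross term between the mean and the oscillatory part.

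The one step that needs genuine care, and the place where I expect the only real difficulty, is relating the Fourier coefficients of $f'$ to those of $f$, because $f'$ is assumed merely to lie in $L_2(\Delta)$ rather than to be continuous, so termwise differentiation of the series is not granted for free. I would obtain the relation by integration by parts on $\Delta$: transferring the derivative from $f$ onto the trigonometric factor in $\int f'\cos(2nt)$ and $\int f'\sin(2nt)$, the boundary contributions cancel because $f(-\frac{\pi}{2})=f(\frac{\pi}{2})$. This manipulation is legitimate precisely because $f\in AC_1(\Delta)$, and it shows that $f'$ has Fourier coefficients $2n\,b_n$ and $-2n\,a_n$.

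Applying Parseval's identity (with the normalizations $\int\cos^2(2nt)=\int\sin^2(2nt)=\frac{\pi}{2}$) then turns both integrals in (\ref{nierWirtbis}) into series in $(a_n,b_n)$: the right-hand side carries weight $1$ on each harmonic, while $\int(f')^2$ carries weight $(2n)^2$. Since $(2n)^2\geq 1$ for every $n\geq 1$, comparing the two series termwise yields (\ref{nierWirtbis}) at once, in fact with room to spare. Reading the same comparison as an equality characterizes the extremal case: equality in (\ref{nierWirtbis}) forces $a_n=b_n=0$ for every $n\geq 1$, so that $f$ is constant, while the familiar trigonometric extremizers of the classical Wirtinger inequality reappear only when the sharp constant is placed on the right-hand side, where the weights coincide on the fundamental harmonic. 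This (strict) positivity, once combined with the mean term of (\ref{iloczynizop}), is exactly what is needed to see that $\langle\cdot,\cdot\rangle_i$ is positively defined.
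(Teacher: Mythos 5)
The paper does not actually prove Theorem \ref{nierWirtingera}: it is \emph{recalled} from the literature, with a pointer to \cite{Har} and \cite{Tre}, so there is no internal proof to measure you against. Judged on its own merits, your Fourier-series argument is the standard route and is correct as far as the inequality (\ref{nierWirtbis}) is concerned. The system $\{\mathbf{1},\cos(2nt),\sin(2nt)\}_{n\geq 1}$ is a complete orthogonal system on an interval of length $\pi$; the integration by parts identifying the coefficients of $f'$ as $2nb_n$ and $-2na_n$ is legitimate because $f$ is absolutely continuous and the boundary terms vanish thanks to $f(-\frac{\pi}{2})=f(\frac{\pi}{2})$ (you are right to insist on absolute continuity: mere continuity plus $f'\in L_2$ would not license that step, and the theorem is really meant for $f\in H^1_0(\Delta)$); Parseval then gives $\int (f')^2-\int(f-\overline{f})^2=\frac{\pi}{2}\sum_{n\geq 1}(4n^2-1)(a_n^2+b_n^2)\geq 0$, which is (\ref{nierWirtbis}) with the sharper constant $4$ available.

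Where you part company with the statement is the equality case, and you are right to do so. Your computation shows that for a $\pi$-periodic function equality forces $a_n=b_n=0$ for all $n\geq 1$, i.e.\ $f$ constant (the genuine extremals of the sharp inequality on this interval being $a\cos 2t+b\sin 2t$). The printed characterization ``equality iff $f=a\cos t+b\sin t$'' is carried over from the $2\pi$-periodic version on an interval of length $2\pi$ and fails here in both directions: $f\equiv 1$ gives equality but is not of that form, while $f=\cos t$ (admissible, since its $\pi$-periodic extension $|\cos t|$ is continuous) gives strict inequality, as $\int(f')^2=\frac{\pi}{2}$ while $\int(f-\overline{f})^2=\frac{\pi}{2}-\frac{4}{\pi}$. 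The discrepancy is harmless for the one place the equality case is invoked, namely Theorem \ref{dodatnia okre\'slono\'s\'c}: there one also knows $\overline{f}=0$, and ``constant with zero mean'' yields $f=0$ even more directly than the paper's detour through $a\cos t+b\sin t$ and the conditions $b=0$, $a=0$. So your proof is complete for the inequality and, in the equality discussion, more accurate than the statement it is proving; it would be worth recording the corrected equality condition explicitly.
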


Now we have the following

\begin{theorem}\label{dodatnia okre\'slono\'s\'c}
Let $f\in H_0^{1}(\Delta)$. Then

\begin{equation}\label{ni}
\langle f,f\rangle_i=2(\int_{-\pi/2}^{\pi/2}f)^2 - \pi
\int_{-\pi/2}^{\pi/2}(f)^2-(f')^2\geq 0.
\end{equation}
and the equality holds only when $f=0$. In other words, the form
(\ref{iloczynizop}) is positively defined.
\end{theorem}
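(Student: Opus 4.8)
The plan is to deduce the positivity (\ref{ni}) directly from the Wirtinger inequality (\ref{nierWirtbis}). Since the constant $\frac{1}{\pi^2}$ in (\ref{iloczynizop}) is positive, it is enough to prove that
\[
Q(f):=2\left(\int_{-\pi/2}^{\pi/2}f\right)^{2}-\pi\int_{-\pi/2}^{\pi/2}\bigl(f^{2}-(f')^{2}\bigr)\ \geq\ 0 .
\]
First I would pass to the mean value $\overline{f}=\frac{1}{\pi}\int_{-\pi/2}^{\pi/2}f$ of (\ref{srednia}); then $\int_{-\pi/2}^{\pi/2}f=\pi\overline{f}$, so the first term of $Q(f)$ equals $2\pi^{2}\overline{f}^{2}$. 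Expanding the square gives the elementary identity $\int_{-\pi/2}^{\pi/2}(f-\overline{f})^{2}=\int_{-\pi/2}^{\pi/2}f^{2}-\pi\overline{f}^{2}$, which rewrites the ``potential'' part of $Q(f)$ in terms of the centred function $f-\overline{f}$.

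Next I would apply Theorem \ref{nierWirtingera}: since $f\in H^{1}_{0}(\Delta)$ is continuous and $\pi$-periodic with $f'\in L_{2}(\Delta)$, the Wirtinger inequality (\ref{nierWirtbis}) yields $\int_{-\pi/2}^{\pi/2}(f')^{2}\geq\int_{-\pi/2}^{\pi/2}(f-\overline{f})^{2}=\int_{-\pi/2}^{\pi/2}f^{2}-\pi\overline{f}^{2}$. Substituting this lower bound for $\int_{-\pi/2}^{\pi/2}(f')^{2}$ into $Q(f)$ and cancelling the $\int_{-\pi/2}^{\pi/2}f^{2}$ terms, I obtain $Q(f)\geq 2\pi^{2}\overline{f}^{2}-\pi^{2}\overline{f}^{2}=\pi^{2}\overline{f}^{2}\geq 0$, hence $\langle f,f\rangle_{i}\geq\overline{f}^{2}\geq 0$. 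This settles the inequality; the computation is routine once the problem is organised around $\overline{f}$ and $f-\overline{f}$.

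The delicate point is the equality case. Collecting the above, one has the exact decomposition
\[
\langle f,f\rangle_{i}=\overline{f}^{2}+\frac{1}{\pi}\left(\int_{-\pi/2}^{\pi/2}(f')^{2}-\int_{-\pi/2}^{\pi/2}(f-\overline{f})^{2}\right),
\]
a sum of two non-negative terms, the second being exactly the Wirtinger defect. Thus $\langle f,f\rangle_{i}=0$ forces simultaneously $\overline{f}=0$ and equality in Wirtinger, i.e. $f=a\cos t+b\sin t$ by Theorem \ref{nierWirtingera}. Here I must invoke the side conditions defining $H^{1}_{0}(\Delta)$ rather than Wirtinger alone: the periodicity $f(-\frac{\pi}{2})=f(\frac{\pi}{2})$ gives $-b=b$, so $b=0$, and then $\overline{f}=0$ applied to $f=a\cos t$ gives $\frac{2a}{\pi}=0$, so $a=0$ and $f\equiv 0$. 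I expect this last step---extracting $f\equiv 0$ from the Wirtinger extremals by combining the mean-zero and boundary constraints---to be the only genuine obstacle, the rest being the standard Wirtinger-based argument that also underlies the classical isoperimetric inequality.
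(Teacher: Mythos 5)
Your proof is correct and follows essentially the same route as the paper: both arguments rewrite $\pi^{2}\langle f,f\rangle_{i}$ as the sum of the squared mean term and ($\pi$ times) the Wirtinger defect $\int (f')^{2}-\int(f-\overline{f})^{2}$, invoke Theorem \ref{nierWirtingera} for non-negativity, and settle the equality case by combining the Wirtinger extremals $a\cos t+b\sin t$ with the periodicity condition (forcing $b=0$) and $\overline{f}=0$ (forcing $a=0$). Your explicit decomposition $\langle f,f\rangle_{i}=\overline{f}^{2}+\frac{1}{\pi}F(f)$ is exactly the paper's identity $\pi^{2}\langle f,f\rangle_{i}=(\int f)^{2}+E(f)$ with $E(f)=\pi F(f)$, just normalised differently.
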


\begin{proof}

 Let us set, as above
\begin{equation}\label{srednia}
 \overline{f} =
\frac{1}{\pi}(\int_{-\frac{\pi}{2}}^{\frac{\pi}{2}}f),
\end{equation}

 and let

\begin{equation}\label{deficytE}
E(f)=(\int_{-\frac{\pi}{2}}^{\frac{\pi}{2}}f)^2 - \pi
\int_{-\frac{\pi}{2}}^{\frac{\pi}{2}}((f)^2-(f')^2)
\end{equation}

We want to show, that $E(f)\geq 0$. Consider the difference

\begin{equation}\label{errorF}
F(f)=\int_{-\frac{\pi}{2}}^{\frac{\pi}{2}}(f')^2 -
\int_{-\frac{\pi}{2}}^{\frac{\pi}{2}}(f-\overline{f})^2.
\end{equation}

 The Wirtinger inequality says precisely that $F(f)\geq 0$.
This implies that
\begin{equation}\label{errorE}
E(f)=\int_{-\frac{\pi}{2}}^{\frac{\pi}{2}}f)^2 - \pi
\int_{-\frac{\pi}{2}}^{\frac{\pi}{2}}(f)^2-(f')^2\geq 0.
\end{equation}

Indeed $$0\leq F(f)=\int_{-\frac{\pi}{2}}^{\frac{\pi}{2}}(f')^2
-\int_{-\frac{\pi}{2}}^{\frac{\pi}{2}}(f)^2 + 2
\int_{-\frac{\pi}{2}}^{\frac{\pi}{2}}f \cdot (\frac{1}{\pi}\cdot
\int_{-\frac{\pi}{2}}^{\frac{\pi}{2}}f) -\pi\cdot
(\frac{1}{\pi^2}\cdot
(\int_{-\frac{\pi}{2}}^{\frac{\pi}{2}}f)^2)$$
$$= \int_{-\frac{\pi}{2}}^{\frac{\pi}{2}}(f')^2 -\int_{-\frac{\pi}{2}}^{\frac{\pi}{2}}(f)^2
+\frac{1}{\pi}(\int_{-\frac{\pi}{2}}^{\frac{\pi}{2}}f)^2,$$ and
this is equivalent to  the desired inequality $E(f)\geq 0$.  Then
$\langle f,f\rangle_i\geq 0$.

If $\langle f,f\rangle_i =0$ then $$\langle f,f\rangle_i =
(\int_{-\frac{\pi}{2}}^{\frac{\pi}{2}}f)^2 + E(f) =0$$and
 hence $\overline{f}=0 $ and $E(f)=0$. Thus
 $$\int_{-\frac{\pi}{2}}^{\frac{\pi}{2}}((f)^2-(f')^2)=0$$and
by (\ref{ni}) $F(f)=0$, which means that the equality holds in the
Wirtinger inequality and additionally $\overline{f}=0$.
 We know that in such
 a case $f=a \cos(t) +b\sin(t)$ and moreover $\overline{f}=0$.
 Since $f(-\frac{\pi}{2})=f(\frac{\pi}{2})$ then
$b=0$ and since $\overline{f}=0$ then $a=0$. Hence $f=0$.

In consequence we have proved that the  $H^1_{0}(\Delta)$ with the
inner product (\ref{iloczynizop}) is an unitary space. The norm
induced by (\ref{iloczynizop}) is

\begin{equation}\label{normi}
||f||_i^2=\frac{1}{\pi^2}\cdot(2\cdot(\int_{-\pi/2}^{\pi/2}f)^2
-\pi\int_{-\pi/2}^{\pi/2}((f)^2-(f')^2)=
 \frac{1}{\pi^2}(o^2(f) + E(f)),
\end{equation}
where $$o^2(f)=(\int_{-\frac{\pi}{2}}^{\frac{\pi}{2}}f)^2.$$ This
ends the proof of  Theorem \ref{dodatnia okre\'slono\'s\'c}.

\end{proof}

We will prove now the reproducing property.

\begin{theorem} \label{aboutkernel}
The function $K:[-\frac{\pi}{2},\frac{\pi}{2}]\times
[-\frac{\pi}{2},\frac{\pi}{2}]\longrightarrow \mathbb R$ defined
by
\begin{equation}\label{kernel}
K_i(x,y)= 2-\frac{\pi}{2}\cdot \sin|x-y|,
\end{equation}
is a reproducing kernel in the unitary space $H^1_{0}(\Delta)$.
This function will be called {\it the isoperimetric kernel}.
\end{theorem}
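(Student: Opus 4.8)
The plan is to verify the two defining requirements of a reproducing kernel: first that for each fixed $y\in\Delta$ the section $k_y:=K_i(\cdot,y)$ belongs to $H^1_0(\Delta)$, and second that $\langle f,k_y\rangle_i=f(y)$ for every $f\in H^1_0(\Delta)$.

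For the membership $k_y\in H^1_0(\Delta)$, I would observe that $k_y(x)=2-\frac{\pi}{2}\sin|x-y|$ is continuous, and that it satisfies the boundary condition defining $H^1_0$: since $|{-\frac{\pi}{2}}-y|=\frac{\pi}{2}+y$ and $|\frac{\pi}{2}-y|=\frac{\pi}{2}-y$, both endpoint values equal $2-\frac{\pi}{2}\cos y$, so $k_y(-\frac{\pi}{2})=k_y(\frac{\pi}{2})$. Differentiating away from the point $x=y$ gives $k_y'(x)=-\frac{\pi}{2}\,\mathrm{sgn}(x-y)\cos(x-y)$, which is bounded and hence lies in $L_2(\Delta)$; thus $k_y\in H^1_0(\Delta)$.

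For the reproducing property I would first record the elementary value $\int k_y=\pi$, obtained by splitting $\int\sin|x-y|$ at $x=y$. The heart of the argument is the evaluation of $\int f'k_y'$. Because of the sign factor in $k_y'$, I would split this integral at $x=y$ and integrate by parts on each of $[-\frac{\pi}{2},y]$ and $[y,\frac{\pi}{2}]$. The boundary contributions at $x=y$ combine to produce the term $-2f(y)$ (up to the overall factor $-\frac{\pi}{2}$), the contributions at $\pm\frac{\pi}{2}$ combine into a multiple of $(f(\frac{\pi}{2})-f(-\frac{\pi}{2}))\sin y$, and the remaining integrals reassemble, using $\sin(x-y)=\mathrm{sgn}(x-y)\sin|x-y|$, into $\int f(x)\sin|x-y|\,dx$. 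Invoking the boundary condition $f(\frac{\pi}{2})=f(-\frac{\pi}{2})$ annihilates the $\sin y$ term, leaving $\int f'k_y'=\pi f(y)-\frac{\pi}{2}\int f(x)\sin|x-y|\,dx$.

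Finally I would substitute $\int k_y=\pi$, the expression $\int fk_y=2\int f-\frac{\pi}{2}\int f(x)\sin|x-y|\,dx$, and the formula just obtained into the definition (\ref{iloczynizop}). The terms in $\int f$ cancel against $2(\int f)(\int k_y)$, the two occurrences of $\int f(x)\sin|x-y|\,dx$ cancel each other, and what survives inside the bracket is exactly $\pi^2 f(y)$; the prefactor $\frac{1}{\pi^2}$ then yields $\langle f,k_y\rangle_i=f(y)$. The main obstacle is purely bookkeeping: correctly tracking the boundary terms and signs generated by splitting at $x=y$ and integrating by parts, where the periodicity hypothesis $f(-\frac{\pi}{2})=f(\frac{\pi}{2})$ is precisely what is needed to make the spurious $\sin y$ contribution vanish.
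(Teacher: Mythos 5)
Your proposal is correct and follows essentially the same route as the paper: computing $\int k_y=\pi$, splitting $\int f'k_y'$ at $x=y$, integrating by parts on each piece, and using the periodicity $f(-\frac{\pi}{2})=f(\frac{\pi}{2})$ to kill the boundary term in $\sin y$, after which the remaining terms cancel to leave $f(y)$. The only addition is your explicit check that $k_y\in H^1_0(\Delta)$, which the paper leaves implicit but which is a legitimate part of the verification.
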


\begin{proof}
 Let $k_y(x)=2-\frac{\pi}{2}\cdot \sin|x-y|,$ (i.e. $k_y= K_i(\cdot,y)$
 is a kernel function).
  We will check (this is the so - called reproduction property) that
for each function $f\in H^1_0(\Delta)$ and for each $y\in
[-\frac{\pi}{2},\frac{\pi}{2}]$ we have:
\begin{equation}\label{wlasnosc reprodukowania}
f(y)=\langle f,k_y\rangle_i
\end{equation}

Let us fix $y$ and $f$ as above. We must verify that
\begin{equation}\label{formula na f(y)}
f(y) = \frac{1}{\pi^2}(2\cdot(
\int_{-\frac{\pi}{2}}^{\frac{\pi}{2}}f)(\int_{-\frac{\pi}{2}}^{\frac{\pi}{2}}k_y)-
\pi\cdot\int_{-\frac{\pi}{2}}^{\frac{\pi}{2}}(f\cdot k_y - f'\cdot
k'_y)).
\end{equation}

Let us denote
$$A=2\cdot(\int_{-\frac{\pi}{2}}^{\frac{\pi}{2}}f)(\int_{-\frac{\pi}{2}}^{\frac{\pi}{2}}k_y),$$

$$B=\int_{-\frac{\pi}{2}}^{\frac{\pi}{2}}f\cdot k_y,$$

$$C=\int_{-\frac{\pi}{2}}^{\frac{\pi}{2}}f'\cdot k'_y.$$

We must check that $$f(y)=\frac{1}{\pi^2}(A-\pi B +\pi C).$$

 Let us recall, that in the  calculations which will be done below, we will frequently write
 $\int h$ instead of $\int_{-\frac{\pi}{2}}^{\frac{\pi}{2}}h(x)dx.$

We have
\begin{equation}\label{wyliczenie A}
A= 2\cdot(\int_{-\frac{\pi}{2}}^{\frac{\pi}{2}}f)\cdot
\int_{-\frac{\pi}{2}}^{\frac{\pi}{2}}(2-\frac{\pi}{2}\sin|x-y|)=2\cdot(\int_{-\frac{\pi}{2}}^{\frac{\pi}{2}}f)\cdot
(2\pi - \pi)=2\pi\cdot (\int_{-\frac{\pi}{2}}^{\frac{\pi}{2}}f).
\end{equation}

Now we will compute $B$.

\begin{equation}\label{wyliczenie B}
 B= \int_{-\frac{\pi}{2}}^{\frac{\pi}{2}} f\cdot(2-\frac{\pi}{2}\cdot \sin|x-y|)
  =2\int_{-\frac{\pi}{2}}^{\frac{\pi}{2}} f -
\frac{\pi}{2}\cdot \int f\cdot \sin|x-y| =$$ $$ 2\int f -
\frac{\pi}{2}\cdot \int f\cdot \sin|x-y| = 2\int f -
\frac{\pi}{2}\cdot(\int_{-\frac{\pi}{2}}^{y}f\cdot \sin(y-x) +
\int_{y}^{\frac{\pi}{2}}f\cdot \sin(x-y))
\end{equation}

The case of $C$ is more complicated. We have
$$C=\int f'\cdot k'_y = \int
f'\cdot(-\frac{\pi}{2}\cdot\sin|x-y|)=$$
$$=\int_{-\frac{\pi}{2}}^{y}(-\frac{\pi}{2})f'\cdot(-\cos(x-y))+\int^{\frac{\pi}{2}}_{y}(-\frac{\pi}{2})
f'\cos(x-y)=$$
$$=\frac{\pi}{2}\int_{-\frac{\pi}{2}}^{y} f'\cos(x-y) -
\frac{\pi}{2}\int^{\frac{\pi}{2}}_{y} f'\cos(x-y).$$

Now we apply twice the integration by parts and we obtain that the
above is equal
$$=\frac{\pi}{2}[f\cos(x-y)|_{-\frac{\pi}{2}}^y
+ \int_{-\frac{\pi}{2}}^{y}f\sin(x-y) -
f\cos(x-y)|^{\frac{\pi}{2}}_y -
\int^{\frac{\pi}{2}}_{y}f\sin(x-y)]=$$
$$=\frac{\pi}{2}[1\cdot f(y)-f(\frac{-\pi}{2})\cos(-\frac{\pi}{2}-y)-f(\frac{\pi}{2})\cos(\frac{\pi}{2}-y)+
1\cdot f(y) - (\int_{\frac{-\pi}{2}}^{y}f(-\sin(x-y)) +
\int^{\frac{\pi}{2}}_{y}f(-\sin(x-y)))].$$

Since $f(-\frac{\pi}{2})=f(\frac{\pi}{2})$ then we obtain

$$
\frac{\pi}{2}[2f(y)-f(\frac{\pi}{2})(\cos(\frac{\pi}{2}+y)+\cos(\frac{\pi}{2}-y))-
\int_{-\frac{\pi}{2}}^{\frac{\pi}{2}}f\cdot \sin|x-y|].$$

But $\cos(\frac{\pi}{2}+y)=-\sin(y)$ and
$\cos(\frac{\pi}{2}-y)=\sin(y)$ so finally we have
 $$C=\frac{\pi}{2}[2f(y)-\int_{-\frac{\pi}{2}}^{\frac{\pi}{2}}f\cdot
 \sin|x-y|].$$

Hence $$\langle f,k_y\rangle_i=\frac{1}{\pi^2}[A-\pi B+\pi C]=
\frac{1}{\pi^2}[2\pi(\int f)-\pi(2(\int f)-\frac{\pi}{2}f\cdot
\sin|x-y|)+\pi\frac{\pi}{2}(2f(y)-\int_{-\frac{\pi}{2}}^{\frac{\pi}{2}}f\cdot
 \sin|x-y|)]$$
$$=\frac{1}{\pi^2}[2\pi(\int f) -2\pi(\int f) +\frac{\pi^2}{2}\int
f\sin|x-y| + \pi^2 f(y) - \frac{\pi^2}{2}\int f\sin|x-y|]= $$
$$=\frac{1}{\pi^2}\cdot{\pi^2} f(y) = f(y).$$

This ends the proof of the Theorem \ref{aboutkernel}
\end{proof}

It remains to prove now the main result of this section, which
says that $H^{1}_{0}(\Delta)$ with the norm (\ref{normi}) is an
RKHS.

\begin{theorem}\label{completness}

With the notations as above, the space $H_{0}(\Delta)$ is a
reproducing kernel Hilbert space and its kernel is given by
\begin{equation} K_i(x,y)=2-\frac{\pi}{2}\sin|x-y|.
\end{equation}
\end{theorem}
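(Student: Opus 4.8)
The plan is to verify directly the two defining properties of a reproducing kernel Hilbert space for the unitary space $(H_0^1(\Delta),||\cdot||_i)$ produced by Theorem \ref{dodatnia okre\'slono\'s\'c}: that it is complete, and that every evaluation functional $f\mapsto f(y)$ is bounded. The reproducing identity itself is already in hand from Theorem \ref{aboutkernel}, so once these two facts are established the kernel of the resulting RKHS is forced to coincide with $K_i$, and nothing more is needed.

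First I would dispose of the evaluation functionals, which is where the reproducing property pays off immediately. Setting $f=k_y$ in (\ref{wlasnosc reprodukowania}) gives $||k_y||_i^2=\langle k_y,k_y\rangle_i=k_y(y)=2-\frac{\pi}{2}\sin 0=2$, uniformly in $y$. Cauchy--Schwarz then yields $|f(y)|=|\langle f,k_y\rangle_i|\le ||k_y||_i\,||f||_i=\sqrt{2}\,||f||_i$ for every $y\in\Delta$, so each evaluation functional is bounded and, crucially, $||f||_\infty\le\sqrt{2}\,||f||_i$. Thus convergence in $||\cdot||_i$ is stronger than uniform convergence.

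For completeness, let $(f_n)$ be $||\cdot||_i$-Cauchy. By the uniform bound just obtained $(f_n)$ is uniformly Cauchy, hence converges uniformly to a continuous $f$; passing to the limit in $f_n(-\frac{\pi}{2})=f_n(\frac{\pi}{2})$ shows $f(-\frac{\pi}{2})=f(\frac{\pi}{2})$. Uniform convergence on the finite interval $\Delta$ also forces $\int(f_n-f_m)^2\to 0$ and $(\int(f_n-f_m))^2\to 0$. Now I would read off the $L_2$-behaviour of the derivatives straight from the definition of the norm: since $\pi^2||g||_i^2=2(\int g)^2-\pi\int g^2+\pi\int (g')^2$, solving for the derivative term and taking $g=f_n-f_m$ gives
\[
\pi\int_{-\pi/2}^{\pi/2}(f_n'-f_m')^2=\pi^2||f_n-f_m||_i^2-2\Big(\int_{-\pi/2}^{\pi/2}(f_n-f_m)\Big)^2+\pi\int_{-\pi/2}^{\pi/2}(f_n-f_m)^2 .
\]
All three terms on the right tend to $0$, so $(f_n')$ is Cauchy in $L_2(\Delta)$ and converges there to some $g$. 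Using $f_n(x)=f_n(-\frac{\pi}{2})+\int_{-\pi/2}^x f_n'$ and passing to the limit (uniformly on the left, in $L_1$ on the right by Cauchy--Schwarz) identifies $f$ as absolutely continuous with $f'=g\in L_2(\Delta)$; hence $f\in H_0^1(\Delta)$, and each of the three ingredients of $||f_n-f||_i^2$ tends to $0$, i.e. $f_n\to f$ in $||\cdot||_i$. This proves completeness.

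The one delicate point, and the real crux of the argument, is that $||\cdot||_i$ is not manifestly coercive in $\int(f')^2$: in $\pi^2||f||_i^2=2(\int f)^2-\pi\int f^2+\pi\int (f')^2$ the terms $-\pi\int f^2$ and $+2(\int f)^2$ carry the ``wrong'' sign and could in principle swamp the derivative term. The resolution lies in the order of the steps: the bounded evaluation functionals first deliver uniform (hence $L_2$ and $L_1$) control of $f_n-f_m$, so those two lower-order terms are already known to vanish, and the identity above then transfers the Cauchy property to the derivatives. With completeness and boundedness of the evaluations in place, $(H_0^1(\Delta),||\cdot||_i)$ is a reproducing kernel Hilbert space, and the reproducing identity (\ref{wlasnosc reprodukowania}) of Theorem \ref{aboutkernel} shows that its kernel is $K_i(x,y)=2-\frac{\pi}{2}\sin|x-y|$, as claimed.
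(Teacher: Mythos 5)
Your proof is correct, and the completeness half is essentially the paper's argument: both derive uniform Cauchyness from the boundedness of the evaluations, then isolate the derivative term in the identity $\pi^2\|f_n-f_m\|_i^2=2(\int(f_n-f_m))^2-\pi\int(f_n-f_m)^2+\pi\int(f_n'-f_m')^2$ to get an $L_2$-Cauchy sequence of derivatives, and identify the limit via $f_n(x)=f_n(-\frac{\pi}{2})+\int_{-\pi/2}^x f_n'$. Where you genuinely diverge is in the treatment of the evaluation functionals. You use the standard RKHS shortcut: since $k_y\in H_0^1(\Delta)$, the already-proved reproducing identity gives $\|k_y\|_i^2=k_y(y)=K_i(y,y)=2$, whence $|f(y)|\le\sqrt{2}\,\|f\|_i$ uniformly in $y$ by Cauchy--Schwarz. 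The paper instead estimates increments: it writes $f(x+h)-f(x)=\langle f,k_{x+h}-k_x\rangle_i$, computes $\|I_{x+h}-I_x\|_i^2=\frac{4}{\pi}\sin|h|$ (again via the reproducing property), obtains the H\"older bound $|f(x+h)-f(x)|\le\sqrt{\pi}\,\sqrt{|h|}\,\|f\|_i$, and then handles the constants through the decomposition $H_0=H_{00}+\mathbb{R}\cdot\mathbf{1}$. Your route is shorter and delivers the explicit uniform bound $\|f\|_\infty\le\sqrt{2}\,\|f\|_i$ directly, which is exactly what the completeness argument needs; the paper's route costs more work but yields as a by-product the H\"older-$\frac12$ continuity of all elements of the space, which it exploits in a later remark. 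Your closing observation about the order of the steps --- that the lower-order terms with the ``wrong'' sign must be controlled by uniform convergence before the derivative term can be extracted --- is precisely the crux, and is handled the same way in the paper.
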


\begin{proof}

The argumentation is similar for example to that in \cite{Pau}.
First we check that the evaluation functionals $e_x(f)=f(x)$ are
bounded with respect to the isoperimetric norm. Since
$H_0(\Delta)=H_{00}+\mathbb {R}\cdot \mathbb {\bf 1}$  then it is
sufficient to prove, that each $e_x$ is bounded on the subspace
$H_{00}\subset H^1_{0}$, composed of those functions which vanish
at $0$, (i.e. we may additionally assume, that $f(0)=0)$.

Let us suppose that $f\in H^1_{0}(\Delta)$. Let $x+h,x\in
[-\frac{\pi}{2},\frac{\pi}{2}]$. By the reproducing property (just
proved) we have:

$$f(x+h)-f(x) = \langle f,k_{x+h}\rangle_i - \langle f,k_x\rangle_i = \langle f,k_{x+h}-k_x\rangle_i,$$
or
$$k_{x+h}(t)-k_x(t) =
(2-\frac{\pi}{2}\sin|t-(x+h)|)-(2-\frac{\pi}{2}\sin|t-x|)=
\frac{\pi}{2}(I_x(t)-I_{x+h}(t)),$$ where $I_{\psi}$ is a diangle
(see (\ref{diangles}),\cite{Tut}). More exactly $I_\psi$ is a
function given by the formula
$I_\psi(\varphi)=\sin|\varphi-\psi|$. Hence we may write (using
Schwarz inequality)

\begin{equation}\label{holder1}
|f(x+h)-f(x)| = |\frac{\pi}{2}\langle f,I_x - I_{x
+h}\rangle_i|\leq \frac{\pi}{2}||f||_i\cdot||I_x-I_{x+h}||_{i}.
\end{equation}
 Now we may compute
the isoperimetric norm of the difference of diangles
$I_x-I_{x+h}$. Namely, using the formula for the isoperimetric
norm from \cite{Tut} we have:

\begin{equation} \label{holder2}
||I_{x+h}-I_x||^{2}_{i}= \frac{1}{4\pi^2}[2(o(I_x)-o(I_{x+h}))^2 -
4\pi m ([I_{x+h},I_x])]=\frac{1}{4\pi^2}\cdot 4\pi\cdot 4 \cdot
1\cdot 1 \cdot \sin|h| = \frac{4}{\pi}\sin|h|.
\end{equation}

However, if we want to have an independent proof of  Theorem
\ref{completness}, we must  compute the norm $||I_{x+h}-I_x||^2$
using  formula (\ref{ni}). A direct calculation of the definite
integrals which are in (\ref{ni}) is perhaps not to difficult, but
is rather time-absorbing. Thankfully, we have the reproducing
property (just proved). Hence
$$||I_\varphi - I_\psi||^2= \langle I_\varphi - I_\psi,I_\varphi -
I_\psi\rangle_i = \frac{2}{\pi}\langle I_\varphi -
I_\varphi,\frac{\pi}{2}I_\psi - 2, 2-\frac{\pi}{2}I_\psi\rangle_i=
\frac{2}{\pi}\langle I_\varphi - I_\psi,-k_\psi+k\varphi\rangle_i
=$$

$$=\frac{2}{\pi}(\langle I_\varphi,-k_\varphi\rangle_i + \langle I_\psi,k_\varphi\rangle_i+
\langle I_\varphi,k_\psi\rangle_i -\langle
I_\psi,k_\psi\rangle_i)= \frac{2}{\pi}(-0 + I_\varphi(\psi) +
I_\psi(\varphi) + 0) =$$
$$=\frac{4}{\pi}\sin|\varphi-\psi|.$$ Setting $h=\varphi-\psi$ we
obtain the desired equality. In consequence
\begin{equation}\label{Holder1}
||I_{x+h}-I_x||_i \leq \sqrt{\frac{4}{\pi}}\sqrt{|h|},
\end{equation}
and hence

\begin{equation} \label{holder3}
|f(x+h)-f(x)|\leq \sqrt{\pi}\cdot \sqrt{|h|}(||f||_i).
\end{equation}

Setting $x=0$ and taking $f$ such that $f(0)=0$ we obtain
$$|f(h)|\leq \sqrt{\pi}\cdot {\sqrt{|h|}}(||f||_i).$$

Hence all evaluation functionals $(e_t)_{t\in \Delta}$ are
bounded. Moreover the norm of uniform convergence on $\Delta$ is
weaker than the convergence with respect to the isoperimetric
norm.

To end the proof of  Theorem \ref{completness} it remains to show,
that $H^{1}_{0}(\Delta)$ with the isoperimetric norm is complete.
Take then a sequence $(f_n)$, which is a Cauchy sequence with
respect to the isoperimetric norm. Hence
$${\pi^2}||f_n-f_m||^2 = o^2(f_n-f_m) + E(f_n-f_m).$$ By our
assumption both sequences $o^2(f_n-f_m)$ and $ E(f_n-f_m)$ tend to
0 (when $m,n \rightarrow \infty$). But $$E(f_n-f_m)= o^2(f_n-f_m)
-\pi (\int(f_n-f_m)^2 - \int(f'_n-f'_m)^2).$$ Since $o^2(f_n-f_m)$
tends to $0$ then $(\int(f_n-f_m)^2 -\int(f'_n-f'_m)^2)$ tends to
$0$. But, by the remark made above, the sequence $(f_n)$ being a
Cauchy sequence with respect to the isoperimetric norm is also a
Cauchy sequence with respect to the norm of uniform convergence on
the compact interval $[-\frac{\pi}{2}, \frac{\pi}{2}]$. Hence
$\int(f_n-f_m)^2$ tends to $0$ and in consequence
$\int(f'_n-f'_m)^2)$ tends to $0$. This means, in particular, that
$(f'_n)$ is a Cauchy sequence in $L^2[-\frac{\pi}{2},
\frac{\pi}{2}]$, then $f'_n$ tends to a function $h$ from
$L^2[-\frac{\pi}{2}, \frac{\pi}{2}]$ with respect to the $L^2$
norm. But $f_n$ tends uniformly to a continuous function $f$.
Since
$$f(x) = \lim_{n}f_n(x) = \lim_n\int_{-\frac{\pi}{2}}^{x}
f'_n(t)dt =\int_{-\frac{\pi}{2}}^{x} h(t)dt,$$ then $f$ is
absolutely continuous and since
$f_n(-\frac{\pi}{2})=f_n(\frac{\pi}{2})$ then also
$f(-\frac{\pi}{2})=f(\frac{\pi}{2})$. Thus $f\in H^1_{0}(\Delta)$
and $||f_n-f||_i$ tends to $0$. Hence $ H^1_{0}(\Delta)$ is
complete with respect to the isoperimetric norm.

\end{proof}

We will end this chapter by a number of remarks.

\begin{remark}\label{remark6}

{\rm The last theorem asserts, that the space $H_0(\Delta)$ with
the norm
\begin{equation}\label{norma w XS}
||f||_i^2=\frac{1}{\pi^2}\cdot(2\cdot(\int_{-\pi/2}^{\pi/2}f)^2
-\pi(\int_{-\pi/2}^{\pi/2}(f)^2-(f')^2)
 =\frac{1}{\pi^2}(o^2(f) + E(f))
 \end{equation}
is isometric to the space $X_\mathcal{S}$  of generalized convex
sets with the norm
\begin{equation} \label{norm w XS}
||[U,V]||^2 =\frac{1}{4\pi^2}[2o^2([U,V])- 4\pi\cdot m([U,V]),
\end{equation}
where $o([U,V]) = o(U)-o(V)$ is the generalized perimeter and

\begin{equation}\label{miararoznicy}
m([U,V])=2m(U)+2m(V)-m(U+V)
\end{equation}
is the generalized Lebesgue measure in the plane. This is true
since both spaces have the same (isoperimetric) kernel. The
details concerning the formulas (\ref{norm w XS}) and
(\ref{miararoznicy}) are described in (\cite{Tut}). The isometry
between $X_\mathcal{S}$ and $H^1_0(\Delta)$ is established by the
correspondence
  $$\mathcal{S}\times \mathcal{S} \ni [U,V]\rightarrow f_{[U,V]}\in H^1_o(\Delta)$$ where
  $$f_{[U,V})(\varphi) = \overline{U}(I_\varphi) -
  \overline{V}(I_{\varphi}),$$and where $\overline{U}(I_\varphi)$
  denotes the width of the set $U$ with respect to the straight
  line generated by the diangle $I_\varphi$, (see \cite{Tut}).

This means, that the term $\int f = \int f_{[U,V]}$ is (because of
the isometry) the perimeter of $[U,V]$ (in particular of $U$) and
the term $$\int_{-\pi/2}^{\pi/2}(f_{[U,V]})^2-(f_{[U,V]}')^2=
m([U,V])$$ equals to the measure of $[U,V]$. This fact is known
for a long time as the so-called Cauchy formula (true not only for
centrally symmetric, but for all convex and compact  sets). Let us
notice that  (\ref{completness}) may be considered as an extension
of the Cauchy formulas for generalized measure.}

\end{remark}

\begin{remark}

{\rm Let us come back to the inequality (\ref{holder3}) proved
above, and saying that for each $f\in H^1_0(\Delta)$, for each
$x\in \Delta$ and $h\rangle0$ the following inequality holds:
\begin{equation}
|f(x+h)-f(x)|\leq \sqrt{\pi}\cdot ||f||_i\sqrt{h}.
\end{equation}
Setting $x+h=x_2$ , $x=x_1$ and $\sqrt{\pi} ||f||_i = C(f)$ we
obtain the evaluation

$$|f(x_2)-f(x_1)|\leq C(f)\sqrt{|x_2-x_1|} =
C(f)|x_2-x_1|^{\frac{1}{2}}. $$

This means, that each function $f\in H^1_0(\Delta)$ (each
$[U,V]\in X_\mathcal{S}$ ) satisfies the H$\ddot{o}$lder condition
with the exponent $\frac{1}{2}$. Hence the function
$$u(x) = |x|^{\frac{1}{3}}\notin X_\mathcal{S}$$and the diangles
are examples, that the exponent $\frac{1}{2}$ cannot be improved.
This is the known H$\ddot{o}$lder condition in Sobolev spaces and
we see that it is not too difficult to prove, when we work in the
representation $H^1_0(\Delta)$. On the other hand, this is not
easy to observe, if we work with generalized convex sets in
$X_\mathcal{S}$.}

\end{remark}
\begin{remark}

{\rm The third remark concerns the coefficient 2 in the formula
$$||[U,V]||^2 =\frac{1}{4\pi^2}[2o^2([U,V])-
4\pi\cdot m([U,V]),$$or in the formula (\ref{ni}). The generalized
isoperimetric inequality
$$o^2([U,V])-4\pi m([U,V])\geq 0,$$ or more exactly the inequality
 $$o^2(x)-4\pi m(x)\geq 0$$
for $x\in X_\mathcal{S}$ gives only a seminorm on $X_\mathcal{S}$
with one-dimensional kernel ${\mathbb R}\cdot {\bf 1}$. Thus as it
was remarked in the Introduction,  to have the "true" norm one
must add "something" non-vanishing on the line ${\mathbb R}\cdot
{\bf 1}$. In this (and the previous \cite{Tut} paper, this added
term is the seminorm $o^2([U,V])$ (more exactly $o^2(x)$ - a
square of the linear functional - for $x\in X_\mathcal{S}$). It is
clear, that one may consider the family of norms of the form
$$||[U,V]||^2 =\frac{1}{4\pi^2}[\theta \cdot o^2([U,V])-
4\pi\cdot m([U,V]),$$ where $\theta > 1$.

The corresponding reproducing kernel has the following form:
$$K_\theta(\varphi,\psi)=
\theta-\frac{\pi}{2}\sin|\varphi-\psi|,$$where $\theta$ is a
greater than 1. For $\theta=2$ we obtain the kernel $K_i$. Let us
observe additionally, that for $\theta=1$ we obtain an interesting
(since canonical) kernel
$$K_1(x,y) = 1-\frac{\pi}{2}\sin|x-y|,$$ corresponding to the subspace of
$X_o\subset X_\mathcal{S}$, which is the kernel of the linear
functional (perimeter) $x\longrightarrow o(x)$. }

\end{remark}

\section{A sequence model}

\vspace{5mm}

The construction of the RKHS space $H^1_0(\Delta)$ presented above
is frequently  called {\it from space to kernel} (see e.g.
\cite{Szaf}). In this section we will construct the same function
space - i.e. $X_\mathcal{S}$ - but in a way, which is called {\it
from kernel to space}. The aim for which we  repeat this commonly
known construction is that it gives a possibility to see a kind of
finite dimensional version of the generalized isoperimetric
inequality. As usual, in this model the space $X_\mathcal{S}$ will
appear to be the completion of a certain function space
$\mathcal{F}$ with respect to a suitable inner product (suitable
norm), corresponding to a given isoperimetric kernel. The idea we
will describe below in details, was used in (\cite{Tut}) in many
places, but in an implicit form.

\subsection{Definition of a certain sequence space.}

Let, as above $\Delta = [-\pi/2,\pi/2]$ and let us consider the
family of diangles, i.e. the family of functions
$(I_\varphi)_{\varphi\in \Delta}$, where
\begin{equation}\label{diangli}
I_{\varphi}:\Delta\ni \psi \longrightarrow
I_{\varphi}(\psi)=\sin|\varphi-\psi|.
\end{equation}

 Let $\mathcal{F}$ denote the subspace of the vector space ${\mathbb
 R}^{\Delta}$,
generated by the constant function $\mathbf{1}$ and by the
diangles i.e.

\begin{equation} \label{spandiangli}
\mathcal{F}=\left\{x_o \cdot \mathbf{1} + \sum_{i=1}^{k} x_i \cdot
I_{\varphi_i} : x_i\in \mathbb R, k=1,2,...\right\}.
\end{equation}

Let us observe, that the space $\mathcal{F}$ is exactly the linear
space generated by the kernel functions $k_\psi=
2\cdot{\bf{1}}-\frac{\pi}{2}I_\psi$ of the considered kernel
$K_i(\varphi,\psi)= 2\cdot {\bf
1}-\frac{\pi}{2}\sin|\varphi-\psi|.$ Now we will define a certain
bilinear form $\langle,\rangle_i$ in $\mathcal{F}$. Take  two
elements $x,y \in \mathcal{F}$ where
$$x=x_o\cdot \mathbf{1} + \sum_{i=1}^{k} x_i
\cdot I_{\varphi_i},$$ and $$y=y_o\cdot \mathbf{1} +
\sum_{j=1}^{m} y_j \cdot I_{\psi_j}.$$ Since $\langle,\rangle_i$
is claimed to be bilinear, then

$$
\langle x,y\rangle_i=\langle x_o\cdot \mathbf{1} + \sum_{i=1}^{k}
x_i \cdot I_{\varphi_i},y_o\cdot \mathbf{1} + \sum_{j=1}^{m} y_j
\cdot I_{\psi_j}\rangle_i = x_o\cdot y_o
\langle\mathbf{1},\mathbf{1}\rangle_i +$$

$$\sum_{j=1}^{m}x_oy_j\langle\mathbf{1},I_{\psi_j}\rangle_i +
\sum_{i=1}^{k}\langle\mathbf{1},I_{\varphi_i}\rangle_i+
\sum_{i=1}^{k} \sum_{j=1}^{m} x_iy_j\langle I_{\varphi_i},
I_{\psi_j}\rangle_i.$$

 Setting ${\langle\bf
1,\bf 1 \rangle}_i=k_{oo}$,
 $\langle{\bf 1},I_\varphi\rangle_i = k_{o,\varphi}$,
 $\langle I_\varphi,I_\psi\rangle_i=k_{\varphi,\psi}$ where $k_{oo},
 k_{o,\varphi}, k_{\varphi,\psi}$ are arbitrarily chosen real
 numbers, one always obtains a bilinear form, but not necessarily positively defined.
  Let us set
\begin{equation}
k_{oo}= \langle\mathbf{1},\mathbf{1}\rangle_i=1,
\end{equation}

\begin{equation}
k_{o,\varphi}=
\langle\mathbf{1},I_{\varphi}\rangle_i=\frac{2}{\pi},
\end{equation}
and
\begin{equation}
k_{\varphi,\psi}=\langle
I_{\varphi},I_{\psi}\rangle_i=\frac{4}{{\pi}^2}(2-\frac{\pi}{2}\cdot
\sin |\varphi_i - \psi_j|).
\end{equation}

Now we set

\begin{definition}\label{ilskaldlaciagow}
For the vectors $x,y \in \mathcal{F}$ defined as above we set

\begin{equation}\label{formuladlacg}
 \langle x,y\rangle_i=x_oy_o + \frac{2}{\pi}\sum_{i=1}^{k}x_iy_o +
\frac{2}{\pi}\sum_{j=1}^{m}x_oy_j + \frac{4}{\pi^2} \sum_{i=1}^{k}
\sum_{j=1}^{m}(2-\frac{\pi}{2}\cdot \sin |\varphi_i -
\psi_j|)x_iy_j.
\end{equation}

\end{definition}

The coefficients in the formula (\ref{formuladlacg}) are chosen in
such a way, that one may easily check directly the reproducing
property of the form $\langle,\rangle_i$. In other words it is
easy to verify, that $\mathcal{F}$ equipped with the scalar
product given by the (\ref{formuladlacg}) is identical (isometric)
with the subspace of the space $X_{\mathcal\mathcal{S}}$ spanned
by the unit disc and all diangles. In consequence
$\langle,\rangle_i$ given by (\ref{formuladlacg}) is really an
inner product (is positively defined) and the completion of
$\mathcal{F}$ equals $X_{\mathcal{S}}$.

Setting in (\ref{formuladlacg}) $x=y$ we obtain the following
formula for the (isoperimetric) norm in $\mathcal{F}$. Namely
\begin{equation}\label{formuladlacg2}
||x||_i^2 = x_o^2 + \frac{4}{\pi}\sum_{i=0}^{k}x_ox_i +
\frac{4}{\pi^2}\sum_{i=1}^{k}\sum_{j=1}^{k}(2-\frac{\pi}{2}
\sin|\varphi_i-\varphi_j|)x_ix_j.
\end{equation}
 This formula may be rewritten in
the form

\begin{equation}\label{formuladlacg3}
||x||_i^2= \frac{4}{\pi^2}\cdot
[(\frac{\pi}{2}x_o+\sum_{i=1}^{k}x_i)^2 -
(\sum_{i=1}^{k}x_i)^2+\sum_{i=1}^{k}
\sum_{j=1}^{k}(2-\frac{\pi}{2}\sin|\varphi_i-\varphi_j|)x_ix_j],
\end{equation}

or in the form
\begin{equation}\label{formuladlacg4}
 ||x_||_i^2= \frac{4}{\pi^2}\cdot
[(\frac{\pi}{2}x_o+\sum_{i=1}^{k}x_i)^2 - (\sum_{i=1}^{k}x_i)^2+
  2\sum_{i,j=1}^{k}x_ix_j -
  \frac{\pi}{2}\sum_{i=1}^{k}\sum_{j=1}^{k}\sin|\varphi_i-\varphi_j|)x_ix_j].
\end{equation}

Now, since $||x||^2\geq 0$ then we obtain the following
inequality:

\begin{equation}\label{formuladlacg5}
(\frac{\pi}{2}x_o+\sum_{i=1}^{k}x_i)^2 +
2\sum_{i,j=1}^{k}x_ix_j\geq (\sum_{i=1}^{k}x_i)^2 +
\frac{\pi}{2}\sum_{i=1}^{k}\sum_{j=1}^{k}\sin|\varphi_i-\varphi_j|)x_ix_j.
\end{equation}

This inequality may be considered as {\it the isoperimetric
inequality for sequences}. If we put $x_0=0$ we obtain the form

\begin{equation}\label{formuladlacg6}
 2\sum_{i,j=1}^{k}x_ix_j\geq
\frac{\pi}{2}\sum_{i=1}^{k}\sum_{j=1}^{k}(\sin|\varphi_i-\varphi_j|)x_ix_j,
\end{equation}
or, finally,  the form

\begin{equation}\label{formuladlacg7}
\sum_{i=1}^{k}\sum_{j=1}^{k}(\sin|\varphi_i-\varphi_j|)x_ix_j\leq
\frac{4}{\pi}(\sum_{i,j=1}^{k}x_ix_j),(=\frac{4}{\pi}(\sum_{i=1}^{k}x_i)^2)).
\end{equation}

Let us remark, that in our interpretation, the sum of diangles
$W=\sum_{i=1}^{k}x_iI_{\varphi_i}$ represents (for non-negatives
$x_i$) a polygon with sides $I_{\varphi_i}$. Since the perimeter
of a diangle is $o(I_\varphi)= 4x_i$ then the number
$(\sum_{i=1}^{k}x_i)^2$ equals $\frac{1}{16}o^2(W)$, so, for
polygon $W$ the last inequality gives

\begin{equation}\label{formuladlacg7}
\sum_{i=1}^{k}\sum_{j=1}^{k}\sin|\varphi_i-\varphi_j|)x_ix_j\leq
\frac{1}{4\pi} o^2(W).
\end{equation}

 It is also not hard to check, that the area of the polygon
$W=\sum_{i=1}^{k}x_iI_{\varphi_i}$  is given by

\begin{equation}\label{formuladlacg8}
area(W) = m(W) =
\sum_{i=1}^{k}\sum_{j=1}^{k}\sin|\varphi_i-\varphi_j|)x_ix_j.
\end{equation}

Hence the classical isoperimetric inequality for sequences says
simply, that the isoperimetric inequality is valid for polygons
(with positive sides). Explicitely (for $W$ as above)
\begin{equation}
\sum_{i=1}^{k}\sum_{j=1}^{k}\sin|\varphi_i-\varphi_j|)x_ix_j =
m(W)\leq \frac{1}{4\pi}o^2(W)= \frac{4}{\pi}(\sum_{i=1}^{k}x_i)^2.
\end{equation}
Let us remark, that as we have proved, the last inequality is
valid for all and  not only for non-negative sequences $(x_i)$.

\vspace{3mm}


\begin{thebibliography}{99}\footnotesize


\bibitem{Aro}
Aronszajn,N.: Theory of Reproducing Kernels,Transactions of the
American Mathematical Society, Vol. 68, 337-404 (1950)

\bibitem{Ber}

Berlinet,A.,Thomas-Agnan,C: Reproducing Kernel Hilbert Spaces in
Probability and Statistics, Springer Science (2004)


\bibitem{Har}

Hardy, G. H.,  Littlewood, J. E., and Pólya, G.: Inequalities, 2nd
ed. Cambridge, England: Cambridge University Press, 1988.



\bibitem{Pau}
Paulsen,Vern,I.: An Introduction to the Theory of Reproduction
Kernel Hilbert Spaces, Dept.of Math., University of Huston, Texas;
https://www.math.uh.edu/~vern/rkhs.pdf


\bibitem{Rad}
 Radstr\"om, H.:
   An Embedding Theorem for Spaces of Convex Sets,
   Proceedings of the American Mathematical Society, Vol.3,
   165-169 (1952)


\bibitem{Szaf}
Szafraniec,F.H.: The Reproducing Kernel Property and Its Space:
The Basics; Springer (2015)

\bibitem{Tre}
Treibergs,A.: Inequalities that Imply the Isoperimetric
Inequality, University of Utah, (2002);
www.math.utah.edu/~treiberg/isoperim/isop.pdf

\bibitem{Tut}
Tutaj,E.: An Example of a Reproducing Kernel Hilbert Space,
Complex Anal. Oper. Theory (2018)
https://doi.org/10.1007/s11785-018-0761-1






\end{thebibliography}
\end{document}